\numberwithin{equation}{section}
\theoremstyle{plain}
\newtheorem{theo}{Theorem}[]
\newtheorem{theorem}{Theorem}[section]
\newtheorem{defn}[theorem]{Definition}
\newtheorem{prop}[theorem]{Proposition}
\newtheorem{cor}[theorem]{Corollary}
\newtheorem{rem}[theorem]{Remark}
\renewcommand{\i}{\operatorname{i}}
\newcommand\C{{\mathbb C}}
\newcommand\R{{\mathbb R}}
\newcommand{\ov}[1]{\overline{ #1}}
\newcommand{\lb}{[\cdot,\cdot]}
\newcommand{\N}{\nabla}
\newcommand{\bk}{\overline{k}}
\newcommand{\br}{\overline{r}}
\newcommand{\bs}{\overline{s}}
\newcommand{\g}{\mathfrak{g}}
\begin{document}
\title{A note on Canonical Ricci forms on $2$-step nilmanifolds}
\author{Luigi Vezzoni}
\date{\today}
\subjclass[2000]{Primary 53C15 ; Secondary 53B15}
\address{Dipartimento di Matematica, Universit\`a di Torino, Torino, Italy.}
\email{luigi.vezzoni@unito.it}

\thanks{The author was supported by the Project M.I.U.R.
``Riemannian Metrics and  Differentiable Manifolds''
and by G.N.S.A.G.A. of I.N.d.A.M.}
\maketitle
\begin{abstract}
In this note we prove that any left-invariant almost Hermitian structure on a $2$-step nilmanifold is Ricci-flat with respect to the Chern connection
and that it is Ricci -flat with respect to another canonical connection if and only if it is cosymplectic (i.e. $d^*\omega=0$).
\end{abstract}

\section{introduction}
Let $(M,g,J,\omega)$ be an almost Hermitian manifold. Gauduchon introduced in \cite{gau} a $1$-parameter family $\N^t$ of canonical Hermitian
connections  which can be distinguished by the properties of the torsion tensor $T$.
In this family $\N^1$ corresponds to so-called Chern connection which can be defined as the unique Hermitian connection whose $(1,1)$-part of the torsion vanishes. In the \emph{quasi-K\"ahler} case (i.e. when $\ov \partial \omega=0$), the line $\{\N^t\}$ degenerates to a single point and the Chern connection is the unique canonical connection.

Any canonical connection $\N^t$ induces the so-called \emph{Ricci form} $\rho^t(X,Y)=2\i {\rm tr}_{\omega}R^t(X,Y)$, where $R^t$ denotes the curvature of $\N^t$. It turns out that $\rho^t$ is always a closed form which can be locally written as the derivative of the $1$-form $\theta^t(X)=\sum_{r=1}^n g(\N^t_{X}Z_r,Z_{\br})$, where $\{Z_r\}$ is a (local) unitary frame. Moreover, in the cosymplectic case (i.e. when $d\omega^{n-1}=0$) the line $\{\theta^t\}$ degenerates to
a single point (see Corollary \ref{deg}) and all the canonical connections have the same Ricci form.

The aim of this paper is to study the Ricci forms $\rho^t$ on $2$-step nilmanifolds equipped with a left-invariant almost Hermitian structure. We recall
that by definition a \emph{$k$-step nilmanifold} is a compact quotient of a $k$-step nilpotent Lie group $G$ by lattice.
Since we are considering \emph{left-invariant} almost Hermitian structures, we can work on Lie algrebras in an algebraic fashion.
Our main result is the following

\begin{theo}\label{main}
Let $(\g,g,J,\omega)$ be a $2$-step nilpotent Lie algebra with an almost Hermitian structure.
Then $(g,J)$ is Ricci-flat with respect to
the Chern connection and it is Ricci-flat with respect to another canonical connection if and only if it is cosymplectic $($i.e. $d^*\omega=0)$.
\end{theo}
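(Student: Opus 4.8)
The plan is to use left-invariance to turn the whole statement into linear algebra on $\g$. Since the almost Hermitian structure is left-invariant, so are $\N^t$, $R^t$, $\rho^t$ and $\theta^t$, and we may regard $\theta^t\in\g^*$ and $\rho^t=d\theta^t\in\Lambda^2\g^*$. For a left-invariant $1$-form one has $d\theta(X,Y)=-\theta([X,Y])$, whence
\[
\rho^t=0\ \Longleftrightarrow\ \theta^t|_{[\g,\g]}=0\ \Longleftrightarrow\ \theta^t\ \text{is closed}.
\]
By Gauduchon's theory (cf. the introduction), $t\mapsto\theta^t$ is affine, $\theta^t=\theta^1+(t-1)\,\psi$, and $\psi$ is, up to a universal nonzero constant, $d^*\omega$; in particular $\theta^t=\theta^1$ for all $t$ exactly when $(\g,g,J)$ is cosymplectic, which is Corollary \ref{deg}, and $\rho^t=\rho^1+(t-1)\,d\psi$. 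Hence it suffices to prove (i) $\rho^1=0$ for every such $\g$, and (ii) on a $2$-step nilpotent $\g$, $d\psi=0$ forces $\psi=0$.

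For (i) I would write the Chern connection from its defining properties ($\N^1 g=\N^1 J=0$ and $(T^1)^{1,1}=0$) in the form $g(\N^1_X Y,Z)=g(D_X Y,Z)+A(X,Y,Z)$, where $D$ is the Levi-Civita connection of $g$ and $A$ is a universal algebraic combination of $d\omega$ and the Nijenhuis tensor $N$, and then show that $\theta^1(\xi)=\tr_{\C}\N^1_\xi=0$ for every $\xi\in[\g,\g]$; by the displayed equivalence this yields $\rho^1=0$. The $2$-step hypothesis is decisive here: $[\g,\g]$ is central, so $\operatorname{ad}_\xi=0$, the Koszul formula collapses to the skew-symmetric endomorphism $g(D_\xi Y,Z)=-\tfrac12 g([Y,Z],\xi)$, and, inserting this together with $\operatorname{ad}_\xi=0$ into $A$ and into the complex trace, the surviving terms all have the shape $\sum_k g(\,[\,\cdot\,,\,\cdot\,]\,,\xi)$ and cancel in pairs, so $\theta^1(\xi)=0$. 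I expect this trace computation — in particular controlling the Nijenhuis contributions in the genuinely non-integrable case — to be the only real obstacle; once $\rho^1=0$ is available, the rest is formal.

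For (ii): if $d^*\omega=0$ then $\psi=0$, hence $\theta^t=\theta^1$ and $\rho^t=\rho^1=0$ for all $t$, which is one implication. Conversely, assume $\rho^t=0$ for some $t\neq1$; then $d\psi=\tfrac1{t-1}(\rho^t-\rho^1)=0$, so $\psi$ is a closed $1$-form on $\g$. Letting $\delta=d^*$ denote the formal adjoint of $d$ with respect to the inner product induced by $g$ on $\Lambda^\bullet\g^*$, the $1$-form $\psi$ is, up to a constant, $d^*\omega=\delta\omega\in\delta(\Lambda^2\g^*)$, hence orthogonal to $\ker(d\colon\Lambda^1\g^*\to\Lambda^2\g^*)$ and in particular to itself; therefore $\psi=0$, i.e. $d^*\omega=0$, so $(\g,g,J)$ is cosymplectic. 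This completes the plan.
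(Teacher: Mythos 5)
Your skeleton is the same as the paper's: regard $\theta^t$ as an element of $\g^*$, use $\rho^t(X,Y)=-\theta^t([X,Y])$, kill the $t$-independent part of $\theta^t$ on $[\g,\g]$ using the $2$-step hypothesis, and identify the $t$-dependent part with $d^*\omega$. But the heart of the matter --- the explicit formula for $\theta^t$ --- is exactly what you defer, and your sketch of it has two inaccuracies. First, the decomposition $\theta^t=\theta^1+(t-1)\psi$ with $\psi$ a \emph{universal} multiple of $d^*\omega$ is not part of the general Gauduchon package quoted in the introduction: on an arbitrary almost Hermitian Lie algebra one has (Propositions \ref{theta} and \ref{g})
$\theta^t(X)=\tfrac12 \i\{-{\rm tr}({\rm ad}_{X}\circ J)+ t\,{\rm tr}\,{\rm ad}_{JX}+(t-1)\, g(d^*\omega,X^{\natural})\}$,
so $\psi(X)=\tfrac12\i\,({\rm tr}\,{\rm ad}_{JX}+g(d^*\omega,X^{\natural}))$, and the extra trace term disappears only because nilpotent algebras are unimodular. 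This formula has to be proved (by expanding the Koszul formula and $d\omega(JX,Z_r,Z_{\br})$ against a unitary frame, as in the paper) or cited; it does not follow from ``the line $\{\theta^t\}$ degenerates in the cosymplectic case''. Second, your cancellation mechanism for $\theta^1(\xi)$, $\xi\in[\g,\g]$, is off: the terms of shape $\sum_r g([\,\cdot\,,\,\cdot\,],\xi)$ coming from the Levi-Civita part and from $d\omega$ do cancel, but what survives is $\tfrac12\i\,{\rm tr}\,{\rm ad}_{J\xi}$ (note $J\xi$ need not be central), which vanishes by unimodularity, not by pairing terms involving $\xi$. The Nijenhuis contributions you flag as the main obstacle are in fact a non-issue, since $N(Z_r,Z_{\br})=0$ identically, so $N$ never enters the trace. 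Until the formula for $\theta^t$ is actually established, the proof is incomplete.

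Your argument for the cosymplectic equivalence, by contrast, is correct and genuinely different from the paper's, and arguably cleaner. Once $\rho^t=\tfrac12\i(1-t)\,g(d^*\omega,[\cdot,\cdot]^{\natural})$ is known, you observe that $\rho^t=0$ for some $t\neq 1$ makes $d^*\omega$ a closed invariant $1$-form lying in the image of $d^*\colon\Lambda^2\g^*\to\Lambda^1\g^*$, hence orthogonal to $\ker(d|_{\Lambda^1\g^*})$ and in particular to itself. The paper reaches the same conclusion by noting that $d\a=0$ for every $\a\in([\g,\g]^{\perp})^{\natural}$, so $d\g^*=d([\g,\g]^{\natural})$ and the vanishing of $g(\omega,d[X,Y]^{\natural})$ for all $X,Y$ already forces $d^*\omega=0$; this is the same orthogonality in different clothing. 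One small point to make explicit: the $d^*$ appearing in the formula for $\theta^t$ must be identified with the algebraic adjoint of $d$ on $\Lambda^{\bullet}\g^*$, which is legitimate for invariant forms precisely because $\g$ is unimodular.
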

This theorem has the following immediate consequence:
\begin{cor}
Every left-invariant almost Hermitian structure on a nilmanifold associated to a $2$-step Lie group is Ricci-flat with respect to the Chern connection.
\end{cor}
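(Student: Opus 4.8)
The plan is to deduce this directly from Theorem~\ref{main} by reducing the computation of the Chern Ricci form on the nilmanifold to the underlying Lie algebra. First I would observe that a left-invariant almost Hermitian structure $(g,J,\omega)$ on a nilmanifold $\Gamma\backslash G$ is, by definition, the datum of a left-invariant metric, almost complex structure and fundamental form on the associated $2$-step nilpotent Lie group $G$; since a left-invariant tensor on $G$ is completely determined by its value at the identity, this is precisely the same as an almost Hermitian structure $(\g,g,J,\omega)$ on the Lie algebra $\g=\mathrm{Lie}(G)$, and the whole structure descends to the compact quotient $\Gamma\backslash G$.

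Next I would argue that the Chern connection $\N^1$, and every object built from it, is itself left-invariant. The Chern connection is singled out by purely tensorial conditions (it is Hermitian and the $(1,1)$-part of its torsion vanishes), so it is preserved by the left translations of $G$; consequently its curvature $R^1$ and the associated Ricci form $\rho^1$ are left-invariant forms on $G$ and therefore descend to left-invariant forms on $\Gamma\backslash G$. In particular $\rho^1$ is determined by its value at the identity, which is exactly the algebraic Ricci form computed from $(\g,g,J,\omega)$ at the Lie-algebra level.

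Finally, Theorem~\ref{main} asserts that any $2$-step nilpotent Lie algebra equipped with an almost Hermitian structure is Ricci-flat with respect to the Chern connection, i.e.\ the algebraic Ricci form vanishes identically on $\g$. Combining this with the previous step, $\rho^1$ vanishes at the identity and, by left-invariance, at every point of $\Gamma\backslash G$, which is the assertion of the corollary. There is essentially no obstacle here: the only point requiring care is that the passage to the quotient be compatible with the canonical constructions, and this is automatic since all the relevant tensors are left-invariant and the projection $G\to\Gamma\backslash G$ is a local isometry for every left-invariant metric.
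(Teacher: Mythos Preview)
Your proposal is correct and is exactly the intended argument: the paper presents the corollary as an immediate consequence of Theorem~\ref{main}, having already noted in the introduction that left-invariant almost Hermitian structures reduce the question to the Lie-algebra level. You have simply spelled out the routine passage from $\Gamma\backslash G$ to $\g$ via left-invariance, which the paper leaves implicit.
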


\bigbreak\noindent{\it Acknowledgments.} The research of this paper has been motivated by a conversation with Simon Salamon. I'm very grateful to him. Furthermore, I'm grateful to Gueo Grantcharov for useful conversations and remarks and to Nicola Enrietti for an important observation on the presentation of the main result.

\section{Preliminaries on canonical connections}
Let $(M,g,J,\omega)$ be an almost Hermitian manifold, where $\omega$ is the fundamental form $\omega(\cdot,\cdot)=g(J\cdot,\cdot)\,.$
The almost complex structure $J$ extends to $r$-forms as
$$
J\alpha(X_1,\dots,X_n)=(-1)^r \alpha(JX_1,\dots,JX_n)
$$
inducing the splittings
$$
TM\otimes \C=T^{1,0}M\oplus T^{0,1}M\,,\quad \Lambda^r(M,\C)=\bigoplus_{p+q=r}\Lambda^{p,q}M\,,
$$
where $\Lambda^{r}(M,\C)$ is the vector bundle of complex $r$-forms on $M$. In particular $\Lambda^{3}M$ splits as
$$
\Lambda^{3}M=\Lambda^{+}M\oplus\Lambda^{-}M\,,
$$
where $\Lambda^{+}M=(\Lambda^{2,1}M\oplus\Lambda^{1,2}M)\cap \Lambda^{3}M$ and
$\Lambda^{-}M=(\Lambda^{3,0}M\oplus\Lambda^{0,3}M)\cap \Lambda^{3}M$. Given a $3$-form $\gamma$ we denote by
$\gamma^+$ and $\gamma^-$ the projection onto $\Lambda^{+}M$ and $\Lambda^{-}M$, respectively. Moreover,
denoting by $\Omega^2(TM)$ the vector space of smooth sections of $\Lambda^2M\otimes TM$, we have the splitting
$$
\Omega^2(TM)=\Omega^{2,0}(TM)\oplus \Omega^{1,1}(TM) \oplus \Omega^{0,2}(TM)
$$
where
$$
\begin{aligned}
& \Omega^{2,0}(TM)=\{B\in\Omega^2(TM)\,\,:\,\,B(JX,Y)=JB(X,Y) \}\,;\\
\vspace{0.1cm}
& \Omega^{1,1}(TM)=\{B\in\Omega^2(TM)\,\,:\,\,B(JX,JY)=B(X,Y) \}\,;\\
\vspace{0.1cm}
& \Omega^{0,2}(TM)=\{B\in\Omega^2(TM)\,\,:\,\,B(JX,Y)=-JB(X,Y) \}\,.
\end{aligned}
$$
Hence any $B\in\Omega^{2}(TM)$ can be written as $B=B^{2,0}+B^{1,1}+B^{0,2}$. Notice that in terms of complex vector fields of type $(1,0)$ we have
$$
B^{2,0}(Z_i,Z_j)=B(Z_i,Z_j)+B(Z_i,Z_j)-\i JB(Z_i,Z_j)-\i JB(Z_i,Z_j)=2 B(Z_i,Z_j)-2\i JB(Z_i,Z_j)\,.
$$
In particular the condition $B^{2,0}=0$ can be written in terms of $(1,0)$ vector fields as $B(Z_i,Z_j)\in T^{0,1}M\,.$
Furthermore $\Omega^2(TM)$ splits as
$$
\Omega^2(TM)=\Omega^2_b(TM)\oplus \Omega^2_c(TM)
$$
where
$$
\begin{aligned}
&g(B_b(X,Y),Z)=\frac12 (g(B(X,Y),Z)-g(B(Z,X),Y)-g(B(Y,Z),X))\,,\\
&g(B_c(X,Y),Z)=\frac12 (g(B(X,Y),Z)+g(B(Z,X),Y)+g(B(Y,Z),X))\,.
\end{aligned}
$$
Now we consider connections on $M$. A connection $\N$ on $M$ is called \emph{Hermitian} if $\N J=0$, $\N g=0$.
It is well-known that every almost Hermitian manifold admits Hermitian connections.
We denote by $\mathcal{C}$ the space of Hermitian connection on $M$. Gauduchon introduced in \cite{gau} the following special class of
Hermitian connections:
\begin{defn}
A connection $\N\in\mathcal{C}$ is called \emph{canonical} if its torsion $T$ satisfies $\,\,T_{b}^{1,1}=0\,.$
\end{defn}
\noindent From \cite{gau} it follows that any canonical connection $\N$ can be written as
\begin{equation}
\label{nablat}
\begin{aligned}
g(\N_{X}Y,Z) = &\,g(D_{X}Y,Z)+\frac{t-1}{4}(d^c\omega)^+(X,Y,Z)+\frac{t+1}{4}(d^c\omega)^+(X, JY, JZ)-g(X,N(Y,Z))+\\
                 &\,\frac12 (d^c \omega)^- (X,Y,Z)\,.
\end{aligned}
\end{equation}
for some $t\in\R$, where $d^{c}$ is the operator acting on $r$-forms as $d^c=(-1)^rJdJ$  and $N$ denotes the Nijenhuis tensor $N(X,Y)=[JX,JY]-[X,Y]-J([JX,Y]+[X,JY]).$

 For $t\in\R$ we denote by $\N^t$ the corresponding canonical connection.
In the special case of a quasi-K\"ahler structure (i.e. $\ov \partial \omega=0$) the space of canonical connections
reduces to a single point, while if
$J$ is integrable (i.e. $N=0$) equation  \eqref{nablat} reduces to
$$
g(\N^t_{X}Y,Z) = g(D_{X}Y,Z)+\frac{t-1}{4}(d^c\omega)(X,Y,Z)+\frac{t+1}{4}(d^c\omega)(X, JY, JZ)\,.
$$
For the parameters $t=1,0,-1$, the family \eqref{nablat} gives the following remarkable cases
\begin{itemize}
\item $t=1$. In this case $\nabla^1$ is called the \emph{Chern connetion}.
This connection can be defined as the unique Hermitian connection satisfying $T^{1,1}=0$.

\vspace{0.1cm}
\item $t=0$. In this case $\N^0$ is called the \emph{first canonical connection}. This connection can be defined
as the unique Hermitian connection whose torsion satisfies $T^{2,0}=0$.

\vspace{0.1cm}
\item $t=-1$. In this case the connection $\N^{-1}$ is important in the complex case where it is known as the \emph{Bismut connection}. Indeed, if $J$ is integrable, then $\N^{-1}$ can be defined as the unique Hermitian connection having totally skew-symmetric torsion (see \cite{Bismut}).

\end{itemize}
\section{Canonical Ricci forms}
Let $(M^{2n},g,J)$ be an almost Hermitian manifold and let $\mathcal{C}$ the space of the associated Hermitian connections.
For any $\N\in\mathcal{C}$ it is defined the Ricci form
$
\rho(X,Y)=2\i {\rm tr}_{\omega} R(X,Y)\,,
$
where $R$ is the curvature tensor
$R(X,Y):=[\N_X,\N_Y]-\N_{[X,Y]}$.  Such a form is always closed and it locally satisfies $\rho=d\theta$, where
$
\theta(X)=\sum_{r=1}^n g(\N_{X}Z_r,Z_{\br})
$
and $\{Z_r\}$ is a local unitary frame.
In the case of a canonical connection $\N^t\in\mathcal{C}$ we use notation  $\rho^t$ and $\theta^t$. We denote by $\natural$ the natural isomorphism between vector fields and $1$-forms induced by $g$. Namely, if $X$ a vector field, then we denote
by $X^{\natural}$ the $1$-form $X^{\natural}(Y)=g(X,Y)$.
We have the following

\begin{prop}\label{theta}
$\theta^t$ is locally defined by
\begin{equation}\label{thetat}
\theta^t(X)=\sum_{r=1}^n \i \Im\mathfrak{m}\left\{g([X +t\i JX,Z_r],Z_{\br})\right\}+\frac12\i (t-1) g(d^*\omega,X^{\natural})\,.
\end{equation}
for any vector field $X$.
\end{prop}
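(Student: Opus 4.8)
The plan is to compute $\theta^t(X) = \sum_{r=1}^n g(\N^t_X Z_r, Z_{\br})$ directly from Gauduchon's formula \eqref{nablat}, term by term, and then identify each piece with the claimed expression in \eqref{thetat}. First I would recall the Koszul-type formula for the Levi-Civita connection $D$, so that $g(D_X Z_r, Z_{\br})$ can be written in terms of brackets $[X,Z_r]$, $[Z_r,Z_{\br}]$, $[Z_{\br},X]$ and derivatives of the (constant, in a unitary frame) metric coefficients. Since $\{Z_r\}$ is unitary, $g(Z_r, Z_{\br})=1$ and the metric is parallel along the frame in the sense that the only surviving contributions come from the bracket terms; the symmetric part of $D$ will cancel in the trace over $r$ while the antisymmetric part reassembles into $g([X,Z_r],Z_{\br})$. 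The contribution of $D$ to $\theta^t$ should then be $\sum_r \frac12\big(g([X,Z_r],Z_{\br}) - g([X,Z_{\br}],Z_r)\big)$ plus a term that vanishes by unitarity, which is exactly the ``$\Im\mathfrak{m}$'' piece with $t=0$. Writing $g([X,Z_r],Z_{\br}) - g([X,Z_{\br}],Z_r) = 2\i\,\Im\mathfrak{m}\{g([X,Z_r],Z_{\br})\}$ (using $Z_{\br}=\ov{Z_r}$ and $g$-reality) gives the shape of the first sum.

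Next I would handle the torsion-type corrections in \eqref{nablat}. The Nijenhuis term $-g(X, N(Z_r, Z_{\br}))$ and the $(d^c\omega)^-$ term are of type that, when summed against the unitary frame, should either cancel among themselves or contribute only to the imaginary part already accounted for; I would check that $N(Z_r,Z_{\br})$ paired with $X$ telescopes in $r$, using that $N$ is of type $(0,2)+(2,0)$ and anti-commutes appropriately with $J$. The genuinely new contribution is the $(d^c\omega)^+$ term: the combination $\frac{t-1}{4}(d^c\omega)^+(X,Z_r,Z_{\br}) + \frac{t+1}{4}(d^c\omega)^+(X,JZ_r,JZ_{\br})$. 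Since $JZ_r = \i Z_r$ and $JZ_{\br}=-\i Z_{\br}$ on type vectors, $(d^c\omega)^+(X,JZ_r,JZ_{\br}) = (d^c\omega)^+(X,Z_r,Z_{\br})$, so the two coefficients add to $\frac{t-1}{4}+\frac{t+1}{4}\cdot(\text{sign})$; tracking the signs carefully I expect the $t+1$ and the $-1$ parts to combine so that what survives is proportional to $(t-1)$ times $\sum_r (d^c\omega)^+(X,Z_r,Z_{\br})$. The remaining task is the identity $\sum_{r=1}^n (d^c\omega)^+(X,Z_r,Z_{\br}) = c\, g(d^*\omega, X^{\natural})$ for the appropriate constant $c$; this is a standard contraction identity relating the codifferential of $\omega$ to the trace of $d^c\omega$ over a unitary frame, which I would derive from $d^*\omega = -\ast d \ast \omega$ together with $d^c = -JdJ$ on $2$-forms and the fact that tracing $d\omega$ against $\omega$ picks out $d^*\omega$. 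Collecting constants should yield exactly $\frac12\i(t-1)$.

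The main obstacle, I expect, will be the bookkeeping in the torsion terms: keeping straight the several conventions (the factor in $\rho = 2\i\,\tr_\omega R$, the sign $(-1)^r$ in $d^c$, the $\pm\i$ from $J$ acting on $(1,0)$ vs $(0,1)$ vectors, and the normalization of $N$) so that the spurious contributions from $N$ and from $(d^c\omega)^-$ really do cancel and do not leak an extra real term into $\theta^t$. A clean way to control this is to first establish the formula for the integrable ($N=0$) case — where \eqref{nablat} has already been simplified in the excerpt — verify \eqref{thetat} there, and then treat the Nijenhuis and $(d^c\omega)^-$ corrections as a separate lemma showing their trace against a unitary frame is purely imaginary and in fact reproduces the $t=0$ form, so that only the $(t-1)$-proportional $d^*\omega$ term is genuinely $t$-dependent. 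Once the two contraction identities (bracket trace $\to$ $\Im\mathfrak{m}$ term, and $(d^c\omega)^+$ trace $\to d^*\omega$ term) are in place, \eqref{thetat} follows by linearity in $t$.
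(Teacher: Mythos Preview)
Your overall architecture is right---expand \eqref{nablat} against $(Z_r, Z_{\br})$, use Koszul for the $D$-term, and match against \eqref{thetat}---but the way you propose to assemble the $(t-1)$ factor is incorrect, and the error propagates.

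First, the Nijenhuis and $(d^c\omega)^-$ corrections are not merely ``telescoping'' or ``purely imaginary'': they vanish identically on the pair $(Z_r,Z_{\br})$. Since $N$ has type $(0,2)+(2,0)$ in its arguments, $N(Z_r,Z_{\br})=0$; and $(d^c\omega)^-\in\Lambda^{3,0}\oplus\Lambda^{0,3}$ evaluates to zero on any triple containing both a $(1,0)$- and a $(0,1)$-vector. So there is nothing to cancel and no need for the separate ``integrable case first'' strategy.

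Second, and more seriously: since $JZ_r=\i Z_r$ and $JZ_{\br}=-\i Z_{\br}$, one has $(d^c\omega)^+(X,JZ_r,JZ_{\br})=(d^c\omega)^+(X,Z_r,Z_{\br})$ with sign $+1$, so the two $(d^c\omega)^+$-terms in \eqref{nablat} combine with coefficient $\tfrac{t-1}{4}+\tfrac{t+1}{4}=\tfrac{t}{2}$, \emph{not} something proportional to $(t-1)$. Hence the torsion correction alone yields
\[
\theta^t(X)=\sum_{r}\Big\{g(D_XZ_r,Z_{\br})-\tfrac{t}{2}\,d\omega(JX,Z_r,Z_{\br})\Big\}.
\]
The factor $(t-1)$ in \eqref{thetat} does not come from the $(d^c\omega)^+$-terms by themselves; it appears only \emph{after} you fully expand both $g(D_XZ_r,Z_{\br})$ via Koszul and $d\omega(JX,Z_r,Z_{\br})$ via the Cartan formula and then regroup. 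Concretely, the Koszul expansion contributes a term $-\tfrac12\, g([Z_r,Z_{\br}],X)$ together with derivative terms $\pm\tfrac12\, Z_{r}g(X,Z_{\br})$, $\pm\tfrac12\, Z_{\br}g(X,Z_r)$, which do \emph{not} vanish ``by unitarity'' as you suggest; the $d\omega$ expansion contributes matching terms with coefficient $t/2$. Their difference carries the factor $(1-t)$, and the remaining work is the contraction identity
\[
\sum_{r=1}^n\Big\{\Im\mathfrak{m}\big\{Z_{r}g(X,Z_{\br})\big\}+\tfrac{\i}{2}\,g([Z_r,Z_{\br}],X)\Big\}=-\tfrac12\,g(\omega,dX^{\natural})=-\tfrac12\,g(d^*\omega,X^{\natural}),
\]
which is the actual substance of the proof. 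Your proposed identity $\sum_r(d^c\omega)^+(X,Z_r,Z_{\br})=c\,g(d^*\omega,X^\natural)$ is not the one that is needed, and in any case would not deliver the correct coefficient.
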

\begin{proof}
First of all we note that if $Z_r$ is a vector field of type $(1,0)$, then
$$
N(Z_r,Z_{\ov r})=(d^c\omega)^-(X,Z_{r},Z_{\ov r})=0\,,\quad (d^{c}\omega)^+(X,Z_{r},Z_{\ov r})=d^{c}\omega(X,Z_{r},Z_{\ov r})=-d\omega(JX,Z_{r},Z_{\ov r})\,.
$$
Hence if $\{Z_r\}$ is a local unitary frame using equation \eqref{nablat} we get
$$
\begin{aligned}
\theta^t(X)=& \sum_{r=1}^n \Big\{g(D_{X}Z_r,Z_{\br})+\frac{t-1}{4}(d^c\omega)(X,Z_r,Z_{\br})+\frac{t+1}{4}(d^c\omega)(X,JZ_r,JZ_{\br})\big\}\\
          =&\sum_{r=1}^n \Big\{g(D_{X}Z_r,Z_{\br})-\frac{t}{2}d\omega(JX,Z_r,Z_{\br})\Big\}\,.
\end{aligned}
$$
Now
$$
\begin{aligned}
2g(D_{X}Z_r,Z_{\br})=&Xg(Z_r,Z_{\ov r})-Z_{\br}g(X,Z_r)+Z_rg(X,Z_{\br})+g([X,Z_r],Z_{\br})+ g([Z_{\br},X],Z_r)-g([Z_r,Z_{\br}],X)\\
                    =&-Z_{\br}g(X,Z_r)+Z_rg(X,Z_{\br})+g([X,Z_r],Z_{\br})+ g([Z_{\br},X],Z_r)-g([Z_r,Z_{\br}],X)
\end{aligned}
$$
and
$$
\begin{aligned}
d\omega(JX,Z_r,Z_{\br})=&(JX)\omega(Z_r,Z_{\ov r})-Z_r\omega(JX,Z_{\ov r})+Z_{\ov r}\omega (JX,Z_r)-\omega([JX,Z_r],Z_{\br})\\
                        &-\omega([Z_{\br},JX],Z_{r})-\omega([Z_{r},Z_{\br}],JX)\\[5pt]
                       =&Z_r g(X,Z_{\ov r})-Z_{\ov r}g (X,Z_r)+\i g([JX,Z_r],Z_{\br})+\i g([Z_{\br},JX],Z_{r})-g([Z_{r},Z_{\br}],X)\,.
\end{aligned}
$$
Then we have
$$
\begin{aligned}
\theta^t(X)=&\frac12 \sum_{r=1}^n\Big\{g([X +t\i JX,Z_r],Z_{\br})- g([X-t\i JX,Z_{\br}],Z_r)+g([Z_r,Z_{\br}],tX-X)\\
            &+(1-t)Z_{r}g(X,Z_{\br})-(1-t)Z_{\br}g(X,Z_r)\Big\}\\
            =&\sum_{r=1}^n\Big\{\i \Im\mathfrak{m}\left\{g([X +t\i JX,Z_r],Z_{\br})+(1-t)Z_{r}g(X,Z_{\br})\right\}-\frac12(1-t) g([Z_r,Z_{\br}],X)\Big\}\,.
\end{aligned}
$$
So in order to prove the statement we have to show that
\begin{equation}\label{[Zr,Zbarr]}
\sum_{r=1}^n\Big\{\Im\mathfrak{m}\left\{Z_{r}g(X,Z_{\br})\right\}+\i \frac12g([Z_r,Z_{\br}],X)\Big\}=-\frac12\, g(\omega,dX^{\natural})\,.
\end{equation}
We can write $X=\sum_{r=1}^{n}(X_r Z_r+X_{\ov r}Z_{\br})$ and
$X^{\natural}=\sum_{r=1}^{n}(X_r\zeta^r+X_{\br}\zeta^{\ov r})$, where $\{\zeta^r\}$ is the coframe dual to
of $\{Z_r\}$. Then we get
$$
\begin{aligned}
g(\omega,dX^{\natural})=&\,\i\sum_{k=1}^ng(\zeta^{k}\wedge\zeta^{\bk},dX^{\natural})\\
=&\,\i\sum_{k,r=1}^n(Z_r(X_{\br})-Z_{\br}(X_{r})-X^{\natural}([Z_r,Z_{\ov r}]))g(\zeta^{k}\wedge\zeta^{\bk},\zeta^{r}\wedge\zeta^{\br})\\
=&\,\i\sum_{k=1}^nZ_k(X_{\bk})-Z_{\bk}(X_{k})-X^{\natural}([Z_k,Z_{\ov k}]))\\
=&\,-2\sum_{k=1}^n\Im\mathfrak{m}\{Z_k(X_{\bk})\}-\i\sum_{k,s=1}^n(B_{k\bk}^sX_s+B_{k\bk}^{\bs}X_{\bs})\\
=&\,-\sum_{k=1}^n \,\left(2\,\Im\mathfrak{m}\{Z_k(X_{\bk})\}+\i g([Z_k,Z_{\ov k}],X)\right)\,,
\end{aligned}
$$
where with $B$ we denote the components of the brackets.
\end{proof}
\begin{cor}
The following formulae hold
\begin{itemize}
\item $\theta^1(X)=2\i\sum_{r=1}^n \Im\mathfrak{m}\,g([X^{0,1},Z_r],Z_{\br})\,;$

\vspace{0.1cm}
\item $\theta^0(X)=\i\sum_{r=1}^n \Im\mathfrak{m}\left\{g([X,Z_r],Z_{\br})\right\}-\i\frac12 g(d^*\omega,X^{\natural})\,;$

\vspace{0.1cm}
\item $\theta^{-1}(X)=2 \i\sum_{r=1}^n \Im\mathfrak{m}\left\{g([X^{1,0},Z_r],Z_{\br})\right\}-\i g(d^*\omega,X^{\natural})\,.$
\end{itemize}

\end{cor}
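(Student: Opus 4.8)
The plan is to derive all three formulae by simply specializing the general expression \eqref{thetat} for $\theta^t$ proved in Proposition \ref{theta} to the values $t=1$, $t=0$ and $t=-1$, together with one elementary identity about the type decomposition of a real vector field.

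First I would dispose of the case $t=0$, which requires nothing beyond substitution: in \eqref{thetat} the factor $X+t\i JX$ becomes $X$ and the coefficient $\tfrac12\i(t-1)$ becomes $-\tfrac12\i$, giving at once $\theta^0(X)=\i\sum_{r=1}^n\Im\mathfrak{m}\{g([X,Z_r],Z_{\br})\}-\tfrac12\i\,g(d^*\omega,X^\natural)$. For the remaining two cases the only extra ingredient is the splitting $X=X^{1,0}+X^{0,1}$ of a real vector field, for which $JX=\i X^{1,0}-\i X^{0,1}$ and hence
$$
X+\i JX=2X^{0,1}\,,\qquad X-\i JX=2X^{1,0}\,.
$$
Plugging $t=1$ into \eqref{thetat} annihilates the $d^*\omega$-term because $t-1=0$, and replaces $[X+\i JX,Z_r]$ by $2[X^{0,1},Z_r]$; pulling the constant $2$ out of $\Im\mathfrak{m}\{\cdot\}$ yields $\theta^1(X)=2\i\sum_{r=1}^n\Im\mathfrak{m}\,g([X^{0,1},Z_r],Z_{\br})$. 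Likewise $t=-1$ replaces $[X-\i JX,Z_r]$ by $2[X^{1,0},Z_r]$ and turns the coefficient $\tfrac12\i(t-1)$ into $\tfrac12\i(-2)=-\i$, producing the stated formula for $\theta^{-1}$.

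Since each step is a direct substitution, there is no genuine obstacle here; the only point that needs care is the sign convention relating $JX$ to the $(1,0)$/$(0,1)$ decomposition, as it is precisely this convention that determines whether the Chern connection ($t=1$) sees $X^{0,1}$ and the Bismut-type connection ($t=-1$) sees $X^{1,0}$, rather than the other way around.
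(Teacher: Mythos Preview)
Your proposal is correct and is exactly the intended derivation: the paper states this corollary immediately after Proposition~\ref{theta} without giving a separate proof, treating each item as a direct specialization of \eqref{thetat}. Your use of $X+\i JX=2X^{0,1}$ and $X-\i JX=2X^{1,0}$ is the only observation needed beyond substitution, and your remark about the sign convention is well placed.
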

It is useful to write down formula \eqref{thetat} in real coordinates. In order to do this we write
$Z_r=\frac{1}{\sqrt{2}}(e_r-\i Je_r)$ for a suitable orthonormal frame $\{e_1,\dots,e_n,Je_1,\dots,Je_n\}$. Then a direct computation gives
$$
\begin{aligned}
2\Im\mathfrak{m}\left\{ g([X +t\i JX,Z_r],Z_{\br})\right\}&= \Im\mathfrak{m}\left\{g([X +t\i JX,e_r-\i Je_r],e_r+\i Je_r)\right\}\\
&= g([X,e_r], Je_r)-g([X,Je_r],e_r)+tg([JX,e_r],e_r)+tg([JX,Je_r],Je_r)
\end{aligned}
$$
and
\begin{equation}
\begin{aligned}
\theta^t(X)=&\,\frac12 \i \sum_{r=1}^n \left\{g([X,e_r], Je_r)-g([X,Je_r],e_r)+tg([JX,e_r],e_r)+tg([JX,Je_r],Je_r)\right\}\\
            &\,+\frac12 \i(t-1) g(d^*\omega,X^{\natural})\,.
\end{aligned}
\end{equation}
A remarkable consequence of formula \eqref{thetat} is the following
\begin{cor}\label{deg}
All canonical connections of a cosymplectic structure have the same Ricci form.
\end{cor}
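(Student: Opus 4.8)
The plan is to derive the statement from Proposition~\ref{theta}. Since every canonical connection satisfies $\rho^{t}=d\theta^{t}$, it is enough to prove that $\theta^{t}-\theta^{1}$ is a closed $1$-form for every $t$ whenever $d^{*}\omega=0$; this difference is globally well defined, because the change-of-unitary-frame correction to $\theta^{t}$ is one and the same closed $1$-form for all $t$, so it cancels in $\theta^{t}-\theta^{1}$. From \eqref{thetat}, and since $t-1$ is constant,
$$\theta^{t}(X)-\theta^{1}(X)=(t-1)\,\mu(X),\qquad\mu(X):=\sum_{r=1}^{n}\i\,\Re\mathfrak{e}\{g([JX,Z_{r}],Z_{\br})\}+\tfrac12\i\,g(d^{*}\omega,X^{\natural}),$$
so the whole problem reduces to the single $1$-form $\mu$.

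I would then compute $\mu$ explicitly. Writing $Z_{r}=\tfrac1{\sqrt2}(e_{r}-\i Je_{r})$ for an orthonormal frame $\{e_{1},\dots,e_{n},Je_{1},\dots,Je_{n}\}$, exactly as in the real-coordinate form of \eqref{thetat} above, one gets
$$2\sum_{r=1}^{n}\Re\mathfrak{e}\{g([JX,Z_{r}],Z_{\br})\}=\sum_{r=1}^{n}\big(g([JX,e_{r}],e_{r})+g([JX,Je_{r}],Je_{r})\big)=-\divg(JX),$$
where the last equality holds because the frame is orthonormal, so that $\sum_{a}g([JX,f_{a}],f_{a})=-\sum_{a}g(D_{f_{a}}(JX),f_{a})$. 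Combining this with the identity
$$\divg(JX)=(d^{*}\omega)(X)-g(\omega,dX^{\natural}),$$
which follows from $\iota_{JX}\mathrm{vol}_{g}=-\tfrac1{(n-1)!}X^{\natural}\wedge\omega^{n-1}$ together with $\mathcal{L}_{JX}\mathrm{vol}_{g}=d\,\iota_{JX}\mathrm{vol}_{g}$ and $d\omega^{n-1}=(n-1)!\,{\ast}\,d^{*}\omega$, the two occurrences of $(d^{*}\omega)(X)$ cancel and one is left with $\mu(X)=\tfrac12\i\,g(\omega,dX^{\natural})$.

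Now the hypothesis enters. When $d^{*}\omega=0$, equivalently $d\omega^{n-1}=0$, the identity above reads $\divg(JX)=-g(\omega,dX^{\natural})$, so $\mu(X)=-\tfrac12\i\,\divg(JX)$, and hence
$$\mu(X)\,\mathrm{vol}_{g}=-\tfrac12\i\,\mathcal{L}_{JX}\mathrm{vol}_{g}=-\tfrac12\i\,d\,\iota_{JX}\mathrm{vol}_{g}=\tfrac{\i}{2(n-1)!}\,d\big(X^{\natural}\wedge\omega^{n-1}\big).$$
Reading this back through the Hodge star and using $d^{2}=0$ and $d\omega^{n-1}=0$ once more, one concludes that $\mu$ is closed, hence $d(\theta^{t}-\theta^{1})=(t-1)\,d\mu=0$, i.e. $\rho^{t}=\rho^{1}$; since $t$ is arbitrary, all canonical connections of a cosymplectic structure have the same Ricci form.

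I expect the last step to be the real obstacle: it is the point at which $d^{*}\omega=0$ is genuinely needed, and one must be careful with the fact that $\theta^{t}$ is defined only up to a closed $1$-form, so that ``$\mu$ is closed'' has to be understood as the assertion $\rho^{t}-\rho^{1}=d(\theta^{t}-\theta^{1})=0$ about globally defined objects. The cleanest way to organize this is to keep \eqref{thetat} in coordinate-free form and observe that, modulo an exact $1$-form, $\theta^{t}-\theta^{1}$ is a fixed constant multiple of $(t-1)\,d^{*}\omega$; under the hypothesis this vanishes identically, and the corollary follows at once.
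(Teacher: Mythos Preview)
Your reduction to the $1$-form $\mu=\tfrac{1}{t-1}(\theta^t-\theta^1)$ is fine, and the computation $2\sum_r\Re\{g([JX,Z_r],Z_{\br})\}=-\divg(JX)$ is correct. The argument breaks down after that. You arrive at $\mu(X)=\tfrac{\i}{2}\,g(\omega,dX^{\natural})$ (equivalently $\mu(X)=-\tfrac{\i}{2}\divg(JX)$ when $d^*\omega=0$), but neither expression is $C^\infty$-linear in $X$; since $\mu$ \emph{is} a genuine $1$-form (being $\theta^t-\theta^1$), this is a signal that something upstream has gone wrong --- in fact the last term in \eqref{thetat} should carry $g(\omega,dX^{\natural})$ rather than $g(d^*\omega,X^{\natural})$ (compare the derivation via \eqref{[Zr,Zbarr]}), and with that correction the non-tensorial pieces no longer cancel the way you have them.

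More seriously, even granting your formula, the closing step does not prove $d\mu=0$. The identity $\mu(X)\,\mathrm{vol}_g=\tfrac{\i}{2(n-1)!}\,d(X^{\natural}\wedge\omega^{n-1})$ is a statement about a \emph{scalar} (times the volume form) for each fixed vector field $X$; it says nothing about the $2$-form $d\mu$. ``Reading this back through the Hodge star and using $d^2=0$'' does not produce $d\mu=0$: you would need $d\mu(X,Y)=X\mu(Y)-Y\mu(X)-\mu([X,Y])$, and your displayed identity gives no handle on that. Your final paragraph proposes the right target --- that $\theta^t-\theta^1$ equals $(t-1)$ times a multiple of $d^*\omega$ up to an exact form --- but this is asserted, not proved, and is essentially equivalent to the corollary itself.

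The paper proceeds differently: it shows directly that $\theta^1=\theta^{-1}$ (in the same local frame), by rewriting $\theta^{-1}(X)=2\i\sum_r\Im\{g([X^{1,0},Z_r],Z_{\br})\}$ through the Levi--Civita connection $D$ and then using two facts specific to the cosymplectic case: that $\sum_r D_{Z_{\br}}Z_r$ is of type $(1,0)$, and the identity \eqref{[Zr,Zbarr]}. Since the $\theta^t$ are affine in $t$, equality at $t=\pm1$ forces equality for all $t$, hence $\rho^t\equiv\rho^1$.
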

\begin{proof}
It is enough to show that $\theta^1=\theta^{-1}$. Since the cosymplectic
condition $d^*\omega=0$ implies
$$
\theta^{-1}(X)=\sum_{r=1}^n2 \i \Im\mathfrak{m}\left\{g([X^{1,0},Z_r],Z_{\br})\right\}
$$
we have
$$
\begin{aligned}
\theta^{-1}(X)=\,&-\sum_{r=1}^n2 \i \Im\mathfrak{m}\left\{g([X^{0,1},Z_{\br}],Z_{r})\right\}=
               -\sum_{r=1}^n2 \i \Im\mathfrak{m}\left\{g(D_{X^{0,1}}Z_{\br},Z_{r})-g(D_{Z_{\br}}X^{0,1},Z_{r})\right\}\\
              =\,& \sum_{r=1}^n2 \i\Im\mathfrak{m}\left\{g(D_{X^{0,1}}Z_{r},Z_{\br})-g(D_{Z_{\br}}X^{0,1},Z_{r})\right\}\\
              =\,&\theta^1(X)+\sum_{r=1}^n2 \i\Im\mathfrak{m}\left\{g(D_{Z_r}X^{0,1},Z_{\br})-g(D_{Z_{\br}}X^{0,1},Z_{r})\right\}\,.
\end{aligned}
$$
Now we observe that $\sum g(D_{Z_r}X^{0,1},Z_{\br})=-\sum g(X^{0,1},D_{Z_r}Z_{\br})=0$, since the cosymplectic
condition forces $\sum D_{Z_{\br}}Z_r$ to be of type $(1,0)$ (see e.g. \cite{Wood}). The last step consists to
show that $\sum \Im\mathfrak{m}\left\{ g(D_{Z_{\br}}X^{0,1},Z_{r})\right\}=0$. Here it is enough to consider the identity
$$
\sum_{r=1}^n\Big\{\Im\mathfrak{m}\left\{Z_{r}g(X,Z_{\br})\right\}+\i \frac12g([Z_r,Z_{\br}],X)\Big\}=\sum_{r=1}^n \Im\mathfrak{m}\left\{ g(D_{Z_{\br}}X^{0,1},Z_{r})\right\}
$$
which can be checked performing a direct computation. Then equation \eqref{[Zr,Zbarr]} implies the statement.
\end{proof}
\begin{rem}{\em
In the Hermitian case this last result was already known. In fact, it can be deduced from formula (8) of \cite{graD}. Another proof of this fact can be found in \cite{lui}.}
\end{rem}
\section{Canonical Ricci forms on Lie algebras}
Now we restrict our attention to left-invariant almost Hermitian structures on Lie groups (or more generally on
left-invariant almost Hermitian structures on quotient of Lie groups by lattices).
Since here all the computations are purely algebraic, we may assume to work on
a Lie algebra $(\g,\lb)$ equipped with an almost Hermitian structure $(g,J)$. An almost Hermitian structure on a Lie algebra is a pair $(g,J)$, where $J$ is an endomorphism of $\g$ satisfying $J^2=-{\rm Id}$ and $g$ is a $J$-Hermitian inner product.
The bracket of $\g$ has not a priori any relation with $J$. The pair $(g,J)$ induces as usual the fundamental form $\omega(\cdot,\cdot)=g(J\cdot,\cdot)$.

\medskip
Proposition \ref{theta} implies the following
\begin{prop}\label{g}
Let $(\g,\lb,g,J)$ be a Lie algebra with an almost Hermitian structure.
For any $t\in\R$ the following formula holds
\begin{equation}
\label{thetatinv}
\theta^t(X)=\frac12 \i\left\{-{\rm tr}({\rm ad}_{X}\circ J)+ t\,{\rm tr}\,{\rm ad}_{JX}+(t-1)\, g(d^*\omega,X^{\natural})  \right\}\,.
\end{equation}
Moreover if  $(\g,\lb)$ is unimodular $($i.e. ${\rm tr}\,{\rm ad}_X=0$ for any $X\in \g);$ then
\begin{equation}\label{tethaunimodular}
\rho^t(X,Y)=\frac12 \i {\rm tr}({\rm ad}_{[X,Y]}\circ J)-\frac12\i(t-1)\, g(d^*\omega,[X,Y]^{\natural})
\end{equation}
and $\rho^t$ is the same for any $t$ if and only if $(g,J)$ cosymplectic.
\end{prop}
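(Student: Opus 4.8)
The plan is to evaluate $\theta^t$ from Proposition~\ref{theta}, recognize the frame sums as traces of adjoint‑type operators, and then differentiate. Working with a global left‑invariant unitary frame, fix an orthonormal basis $\{e_1,\dots,e_n,Je_1,\dots,Je_n\}$ of $\g$ and put $Z_r=\frac1{\sqrt2}(e_r-\i Je_r)$. Recall the real‑coordinate identity obtained above from \eqref{thetat},
$$
2\,\Im\mathfrak{m}\left\{g([X+t\i JX,Z_r],Z_{\br})\right\}=g([X,e_r],Je_r)-g([X,Je_r],e_r)+t\,g([JX,e_r],e_r)+t\,g([JX,Je_r],Je_r).
$$
Summing over $r$ and using that $\{e_r,Je_r\}_r$ is a full orthonormal basis of $\g$, the terms carrying $t$ add up to $t\,\tr\,{\rm ad}_{JX}$, while expanding $\tr({\rm ad}_X\circ J)=\sum_r\bigl(g([X,Je_r],e_r)-g([X,e_r],Je_r)\bigr)$ shows that the remaining terms add up to $-\tr({\rm ad}_X\circ J)$. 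Hence $\sum_r\i\,\Im\mathfrak{m}\{g([X+t\i JX,Z_r],Z_{\br})\}=\frac12\i\{-\tr({\rm ad}_X\circ J)+t\,\tr\,{\rm ad}_{JX}\}$, and substituting this into Proposition~\ref{theta} yields \eqref{thetatinv}.

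Next assume $\g$ unimodular, so $\tr\,{\rm ad}_Z=0$ for all $Z\in\g$ and the middle term of \eqref{thetatinv} disappears. On a Lie algebra the Chevalley--Eilenberg differential of a left‑invariant $1$‑form $\alpha$ satisfies $d\alpha(X,Y)=-\alpha([X,Y])$; since $\rho^t=d\theta^t$ this gives $\rho^t(X,Y)=-\theta^t([X,Y])$, and plugging \eqref{thetatinv} in at $[X,Y]$ (using $\tr\,{\rm ad}_{J[X,Y]}=0$) produces \eqref{tethaunimodular}.

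Finally, the equivalence. If $d^*\omega=0$ the $t$‑dependent term of \eqref{tethaunimodular} vanishes identically, so $\rho^t$ is independent of $t$ (alternatively this is Corollary~\ref{deg}). Conversely, if $\rho^t$ does not depend on $t$, then comparing \eqref{tethaunimodular} for two distinct values of $t$ forces $g(d^*\omega,[X,Y]^{\natural})=0$ for all $X,Y\in\g$, i.e.\ $d^*\omega$ is $g$‑orthogonal to the subspace $([\g,\g])^{\natural}$ of $\g^*$. The step that genuinely uses an idea is that $d^*\omega$ nevertheless \emph{lies in} $([\g,\g])^{\natural}$: for unimodular $\g$ the codifferential $d^*\colon\Lambda^2\g^*\to\g^*$ on invariant forms is the metric adjoint of the Chevalley--Eilenberg differential $d\colon\g^*\to\Lambda^2\g^*$ (when $\g$ admits a lattice this is simply Stokes' theorem), $\ker d$ is exactly the annihilator of $[\g,\g]$, and therefore the image of $d^*$ equals $(\ker d)^{\perp}$, which the isometry $\natural$ identifies with $([\g,\g])^{\natural}$. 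Thus $d^*\omega$ is at the same time inside and orthogonal to $([\g,\g])^{\natural}$, hence $d^*\omega=0$ and $(g,J)$ is cosymplectic. Apart from this identification of the image of $d^*$, the argument is routine bookkeeping around Proposition~\ref{theta}.
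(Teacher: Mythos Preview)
Your proof is correct and follows essentially the same route as the paper. The paper treats \eqref{thetatinv} and \eqref{tethaunimodular} as immediate from Proposition~\ref{theta} and only spells out the final equivalence; there it shows directly that $dX^{\natural}=0$ for $X\in[\g,\g]^{\perp}$, whence $d\g^*=d([\g,\g]^{\natural})$, which is the concrete version of your observation that $\ker d$ is the annihilator of $[\g,\g]$ and hence (via the adjointness of $d$ and $d^*$ in the unimodular case) $\mathrm{im}\,d^*=([\g,\g])^{\natural}$.
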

\begin{proof}
The only non-trivial part of the statement is the last assertion. So
we have just to show that condition $g(\omega,d[X,Y]^{\natural})=0$ is equivalent to $d^*\omega=0$.
We can write $\g=[\g,\g]\oplus [\g,\g]^{\perp}$. Let $X\in [\g,\g]^{\perp}$, then
$$
dX^{\natural}(Z,W)=-g(X,[Z,W])=0\,.
$$
Hence for every $X\in[\g,\g]^{\perp}$, $dX^{\natural}=0$. This implies that $d\g^*=d([\g,\g]^{\natural})$ and the claim follows.
\end{proof}

Now we can prove Theorem \ref{main}:
\begin{proof}[Proof of Theorem $\ref{main}$]
Let $(\g,\lb,g,J)$ is a $2$-step nilpotent Lie algebra with an almost
Hermitian structure. Then, taking into account that $\g$ is unimodular, the {\rm 2-step} condition implies that $[\g,\g]$ is contained in the center of $\g$ and ${\rm tr}({\rm ad}_{[X,Y]}\circ J)=0$ for every $X,Y\in \g$.
Then formula \eqref{tethaunimodular} reduces to
\begin{equation}\label{rhot}
\rho^t(X,Y)=\frac12\i (1-t)\,g(d^*\omega,[X,Y]^{\natural})
\end{equation}
and first claim follows.
\end{proof}



Proposition \ref{g} allows us to describe the behavior of $\{ \rho^t\}$ for some special almost Hermitian structures:
\begin{prop}
Let $(\g\lb,g,J)$ be an almost Hermitian Lie algebra.
\begin{itemize}
\item If $J$ is bi-invariant $($i.e. $[J\cdot,\cdot]=J[\cdot,\cdot])$, then
$$
\theta^{t}(X)=(t-1)\i{\rm tr}({\rm ad}_{JX})\,,\quad \rho^t(X,Y)=\i(1-t)\,{\rm tr}({\rm ad}_{[JX,Y]})\,.
$$

\item If $J$ is anti-bi-invariant $($i.e. $[J\cdot,\cdot]=-J[\cdot,\cdot])$, then
$$
\theta^{t}=0\,,\quad \rho^t=0\,.
$$

\item If $J$ is abelian $($i.e. $[J\cdot,J\cdot]=[\cdot,\cdot]$ $)$, then
$$
\begin{aligned}
& \theta^t(X)=\frac12\i\left\{ (1+t)\,{\rm tr}({\rm ad}_{JX})+(t-1)\, g(d^*\omega,X^\natural)\right\}\,,\\
& \rho^t(X)=\frac12\i\left\{- (1+t)\,{\rm tr}({\rm ad}_{J[X,Y]})+ (1-t)\,g(d^*\omega,[X,Y]^\natural)\right\}\,.
\end{aligned}
$$
\item If $J$ is anti-abelian $($i.e. $[J\cdot,J\cdot]=-[\cdot,\cdot]$ $)$, then
$$
\theta^t(X)=\frac12 \i (1+t)\,{\rm tr}({\rm ad}_{JX})\,,\quad
\rho^t(X,Y)=-\frac12 \i (1+t)\,{\rm tr}({\rm ad}_{J[X,Y]})\,.
$$
\end{itemize}
In particular in the unimodular case bi-invariant, anti-bi-invariant and anti-abelian almost Hermitian structures are Ricci-flat
with respect to any canonical connection, while in the abelian case $\rho^t$ is given by the following formula
$$
\rho^t(X,Y)=\frac12\i(1-t)\,g(d^*\omega,[X,Y]^\natural).
$$
and $\rho^t=0$ for $t\neq 0$ if and only if $(g,J)$ is a cosymplectic structure.
\end{prop}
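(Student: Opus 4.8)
The plan is to read all four cases off formula \eqref{thetatinv} of Proposition \ref{g}, which already isolates the only pieces of bracket data that can enter $\theta^t$: the traces $\tr(\operatorname{ad}_X\circ J)$ and $\tr\operatorname{ad}_{JX}$, together with the term $g(d^*\omega,X^\natural)$. First I would observe that each of the four hypotheses on $J$ is exactly a linear identity among the endomorphisms $\operatorname{ad}_{JX}$, $J\circ\operatorname{ad}_X$ and $\operatorname{ad}_X\circ J$ of $\g$: bi-invariance reads $\operatorname{ad}_{JX}=J\circ\operatorname{ad}_X$, and together with the skew-symmetry of the bracket it also yields $\operatorname{ad}_X\circ J=\operatorname{ad}_{JX}$; anti-bi-invariance reads $\operatorname{ad}_{JX}=-J\circ\operatorname{ad}_X$, and again, via skew-symmetry, $\operatorname{ad}_X\circ J=\operatorname{ad}_{JX}$; substituting $Y\mapsto JW$ in the abelian identity $[J\cdot,J\cdot]=[\cdot,\cdot]$ and using $J^2=-\operatorname{Id}$ gives $[JX,W]=-[X,JW]$, i.e. $\operatorname{ad}_{JX}=-\operatorname{ad}_X\circ J$; the same manipulation on the anti-abelian identity gives $[JX,W]=[X,JW]$, i.e. $\operatorname{ad}_{JX}=\operatorname{ad}_X\circ J$. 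Since $\tr(J\circ\operatorname{ad}_X)=\tr(\operatorname{ad}_X\circ J)$ by cyclicity of the trace, each case thus yields a relation $\tr(\operatorname{ad}_X\circ J)=\pm\tr\operatorname{ad}_{JX}$; moreover in the anti-bi-invariant case the two relations together force $\tr(\operatorname{ad}_X\circ J)=-\tr(\operatorname{ad}_X\circ J)$, so that \emph{both} traces vanish.

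Next I would dispose of the codifferential term in the three cases where it disappears. The input is the computation carried out inside the proof of Proposition \ref{theta}: in the left-invariant setting one may take the frame $\{Z_r\}$ left-invariant, whereupon the derivative terms there drop out and $g(d^*\omega,X^\natural)=g(\omega,dX^\natural)=-\i\sum_r g([Z_r,Z_{\br}],X)$; writing $Z_r=\frac1{\sqrt2}(e_r-\i Je_r)$ one computes $[Z_r,Z_{\br}]=\i[e_r,Je_r]$, hence $g(d^*\omega,X^\natural)=\sum_r g([e_r,Je_r],X)$. Now $[e_r,Je_r]$ vanishes when $J$ is bi-invariant or anti-bi-invariant (it equals $\pm J[e_r,e_r]$), and also when $J$ is anti-abelian (the derived identity $[JX,W]=[X,JW]$ at $X=W=e_r$ gives $[e_r,Je_r]=-[e_r,Je_r]$). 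Thus $d^*\omega=0$ in the bi-invariant, anti-bi-invariant and anti-abelian cases, while in the abelian case $d^*\omega$ need not vanish and is carried along.

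With these two inputs each formula for $\theta^t$ becomes a one-line substitution into \eqref{thetatinv}: the anti-bi-invariant case gives $\theta^t\equiv0$ since all three contributions vanish, and in the remaining three cases one plugs in the appropriate sign of $\tr(\operatorname{ad}_X\circ J)=\pm\tr\operatorname{ad}_{JX}$, retaining the surviving term $(t-1)g(d^*\omega,X^\natural)$ in the abelian case. The Ricci forms then follow at once: $\theta^t$ is a left-invariant $1$-form, so $\rho^t=d\theta^t$ satisfies $\rho^t(X,Y)=-\theta^t([X,Y])$ (as in the derivation of \eqref{tethaunimodular}), and one evaluates the formula for $\theta^t$ at $[X,Y]$, rewriting $\operatorname{ad}_{J[X,Y]}=\operatorname{ad}_{[JX,Y]}$ in the bi-invariant case. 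Finally, for the last assertion: in the unimodular case $\tr\operatorname{ad}$ vanishes identically on $\g$, annihilating every trace term; hence $\rho^t=0$ for the bi-invariant, anti-bi-invariant and anti-abelian structures, and $\rho^t(X,Y)=\frac12\i(1-t)g(d^*\omega,[X,Y]^\natural)$ in the abelian case. For $t\neq1$ the latter is the zero $2$-form exactly when $g(\omega,d[X,Y]^\natural)=0$ for all $X,Y$, which — by the decomposition $d\g^*=d([\g,\g]^\natural)$ established in the proof of Proposition \ref{g} — happens exactly when $d^*\omega=0$, i.e. when $(g,J)$ is cosymplectic.

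The point to watch is bookkeeping rather than anything conceptual: keeping track of the correct sign in $\tr(\operatorname{ad}_X\circ J)=\pm\tr\operatorname{ad}_{JX}$ for each of the four hypotheses, and verifying $[e_r,Je_r]=0$ in the three relevant cases; past that, everything is linear algebra on $\g$ resting on the formulas already proved in Propositions \ref{theta} and \ref{g}.
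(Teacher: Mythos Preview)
Your proposal is correct and follows precisely the route the paper intends: the paper does not include an explicit proof of this proposition, merely prefacing it with ``Proposition \ref{g} allows us to describe the behavior of $\{\rho^t\}$ for some special almost Hermitian structures,'' and your argument spells out exactly that substitution into formula \eqref{thetatinv}, together with the observation (via the identity \eqref{[Zr,Zbarr]} specialized to the left-invariant setting) that $d^*\omega$ vanishes in the bi-invariant, anti-bi-invariant and anti-abelian cases. Your final paragraph, invoking $d\g^*=d([\g,\g]^\natural)$ from the proof of Proposition \ref{g} to characterize when the abelian Ricci form vanishes, is likewise the paper's own mechanism.
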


\begin{rem}
{\em We remark the following facts:
\begin{itemize}
\item The bi-invariant condition $[J\cdot,\cdot]=J[\cdot,\cdot]$ is equivalent to require that the simply-connected  Lie group associated to $(\g,J)$ is a complex Lie group. The fact that a bi-invariant almost Hermitian structure on an unimodular Lie algebra is Ricci-flat with respect any canonical connection has been already proved by Grantcharov in \cite{Gueo}.

\vspace{0.1cm}
\item The anti-bi-invariant condition $[J\cdot,\cdot]=-J[\cdot,\cdot]$ is equivalent to require that any $J$-compatible
inner product on $\g$ is quasi-K\"ahler and flat with respect to the Chern connection $\N^1$ (see \cite{DV}).

\vspace{0.1cm}
\item The abelian condition $[J\cdot,J\cdot]=[\cdot,\cdot]$  was introduced in \cite{Barberis} and was intensely studied in \cite{Adrian,BarberisDottiVerbisky,Dotti Fino,sergiun,Maclaughlin}. This condition is equivalent to require that $\g^{1,0}$ is an abelian Lie algebra.

\vspace{0.1cm}
\item Finally, the  anti-abelian condition $[J\cdot,J\cdot]=-[\cdot,\cdot]$ was studied in \cite{DVL}.
\end{itemize}
}
\end{rem}

\begin{rem}{\em
Theorem \ref{main} can be applied to the Heisenberg Lie algebras $\mathfrak{h}_{n}(\R)$ and $\mathfrak{h}_n(\C)$.
That accords to Theorem 4.1 of \cite{tosatti} and Proposition 4.10 and 4.11 of \cite{DVagag}. Moreover things work differently either in  the $3$-step nilpotent case or in the $2$-step solvable case (see \cite{DVagag}).
}
\end{rem}

\end{document}